\title{Pathology of formal locally-trivial deformations in positive characteristic}
\author{Takuya Miyamoto\thanks{\textsc{Mathematical Sciences, the University of Tokyo, Meguro Komaba 3-8-1, Tokyo, Japan}\\
    \textit{E-mail address}: \texttt{miyamoto-takuya@g.ecc.u-tokyo.ac.jp}}}
\begin{document}
\maketitle
\begin{abstract}
An infinitesimal deformation of an algebraic variety $X$ is called (formally) locally trivial if it is Zariski-locally isomorphic to the trivial deformation, and the locally trivial deformation functor of $X$ is the subfunctor of the usual deformation functor associated with $X$ consisting of locally trivial deformations.
In this article, we construct explicit examples that are algebraic varieties in positive characteristic to show that locally trivial deformation functors do not always satisfy Schlessinger's first condition $(H_1)$, in contrast to the complex/characteristic $0$ case.  The first example is an algebraic curve and the second is a normal rational projective surface with only one rational double point. In constructing them, the characteristic $p$ can be any positive prime. The proof that they do not satisfy $(H_1)$ depends on non-liftability of certain kinds of infinitesimal deformation automorphisms, which is a phenomenon peculiar to positive characteristic. In particular, this provides a negative answer to a question posed by H. Flenner and S. Kosarew.

\end{abstract}
\tableofcontents
\

\newtheorem{theorem}{Theorem}[section]
\newtheorem{corollary}{Corollary}[theorem]
\newtheorem{lemma}[theorem]{Lemma}
\newtheorem{definition}{Definition}
\newtheorem{deflemma}[theorem]{Definition-Lemma}
\newtheorem{remark}{Remark}

\section{Introduction}
We will consider (formal) locally trivial deformations and locally trivial deformation functors. An infinitesimal deformation of an algebraic variety $X$ is called (formally) locally trivial if it is Zariski-locally isomorphic to the trivial deformation. The locally trivial deformation functor of $X$ is the subfunctor of the usual deformation functor associated with $X$ consisting of locally trivial deformations. It has been shown that, if $X$ is a complex analytic space or an algebraic scheme over a field of characteristic $0$, its locally trivial deformation functor $\operatorname{Def}_X’$ satisfies Schlessinger’s axioms $(H_1) $ and $(H_2)$, so that if it satisfies $(H_3)$ in addition, $X$ admits semiuniversal locally trivial deformation \cite[Corollary 2.6]{BGL}. This fact is fundamental and important in the theory of locally trivial deformations. Then, as pointed out in \cite[p. 630]{Fle}, the natural question arises: What if $X$ is an algebraic scheme over a field of positive characteristic $p$? The proof for the characteristic $0$ case does not apply because it involves exponentials and in particular the fraction $\frac{1}{p!}$. In \cite[Theorem 2.4.1]{Ser}, it is asserted that locally trivial deformation functors satisfy $(H_1)$ even in positive characteristic, but there seems to be a subtle gap in the proof. We answer this question in the negative by the following theorem:
\begin{theorem}
There exist a singular rational curve and a normal rational projective surface with one rational double point whose locally trivial deformation functors do not satisfy Schlessinger's condition $(H_1)$.
\end{theorem}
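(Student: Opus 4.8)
The plan is to unwind Schlessinger's condition $(H_1)$ into a concrete gluing problem and then obstruct it with a phenomenon peculiar to characteristic $p$. Recall that $(H_1)$ demands, for every small extension $A''\twoheadrightarrow A_0$ (and every $A'\to A_0$), surjectivity of $\operatorname{Def}_X'(A'\times_{A_0}A'')\to\operatorname{Def}_X'(A')\times_{\operatorname{Def}_X'(A_0)}\operatorname{Def}_X'(A'')$; I will take $A''=A'$. An element of the right-hand side is a pair of locally trivial deformations $(\xi',\xi'')$ over $A'$ together with an isomorphism $\phi$ of their restrictions to $A_0$, and it lies in the image exactly when, for \emph{some} choice of $\phi$, the fibre product $\xi'\times_\phi\xi''$ over $A'\times_{A_0}A'$ is still \emph{locally} trivial. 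Choosing trivializations of $\xi',\xi''$ over affine opens $U$, one checks that $\xi'\times_\phi\xi''$ is trivial over $U$ precisely when the automorphism of $U\times\operatorname{Spec} A_0$ obtained by expressing $\phi$ in those trivializations lifts to an automorphism of $U\times\operatorname{Spec} A'$ over $A'$. Over smooth affine opens such a lift always exists (the obstruction sits in $\operatorname{Ext}^1$ of a locally free sheaf and vanishes), so the whole question localizes at the singular points of $X$: $(H_1)$ will fail as soon as we produce $X$, a singular point $P\in X$, a small extension $A'\to A_0$, and locally trivial deformations $\xi',\xi''$ over $A'$ agreeing over $A_0$ for which, for \emph{every} admissible $\phi$, the induced local automorphism near $P$ does not lift over $A'$, and for which the group of global automorphisms of the $A_0$-deformation is too small to repair this.

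The characteristic-$p$ input is the existence of non-liftable infinitesimal automorphisms. Take $A_0=k[t]/(t^p)$ and $A'=k[t]/(t^{p+1})$. For a vector field $D$ on an affine open $U$, the truncated exponential $\exp_{<p}(tD)=\sum_{m=0}^{p-1}t^mD^m/m!$ is a well-defined automorphism of $U\times\operatorname{Spec} A_0$ over $A_0$, since the denominators $1!,\dots,(p-1)!$ are invertible modulo $p$; but extending it over $A'$ would require the term $t^pD^p/p!$, and $p!\equiv 0$. Concretely, the obstruction to lifting is the class of the Hochschild $2$-cocycle $c_D(a,b)=\sum_{m=1}^{p-1}\frac{1}{m!\,(p-m)!}\,D^m(a)\,D^{p-m}(b)$ in $T^1_U$; on a smooth variety this class vanishes --- which is exactly where the classical characteristic-$0$ argument silently invokes $\exp$ --- so I must feed the construction a singularity whose $T^1$ detects $c_D$.

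For the curve I would build an explicit singular rational projective curve --- for instance $\mathbb P^1$ with a single $p$-dependent singularity imposed at one point (a unibranch singularity whose local ring is a carefully chosen subring of $k[u]$, or an ordinary $p$-fold point) --- carrying near the singularity a vector field $D$ with $[c_D]\neq 0$ in $T^1_{X,P}$ and with global automorphism group too small to absorb this class; then the desired $\xi',\xi''$ are produced by gluing the trivial family to itself over $X\smallsetminus\{P\}$ and a neighbourhood $U_P$ along local lifts of $\exp_{<p}(tD)$, which exist separately over the two pieces modulo $t^p$ but cannot be matched over $A'$. For the surface I would use that a rational double point has $T^1\neq 0$ (it admits nontrivial local deformations), choose a vector field $D$ near the singular point with $[c_D]\neq 0$, and realize the corresponding singularity as the unique singularity of a normal rational projective surface --- e.g. a weighted projective plane $\mathbb P(a,b,c)$, or the contraction of an ADE configuration of $(-2)$-curves on a smooth rational surface --- so that again no global automorphism and no choice of $\phi$ can restore local triviality of the glued family.

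The step I expect to be the genuine obstacle is twofold. First, verifying $[c_D]\neq 0$: one must choose the singularity and $D$ so that the Hochschild computation in characteristic $p$ --- governed by the coefficients $1/(m!\,(p-m)!)$ and Lucas-type congruences for $\binom{n}{p}$ --- really yields a nonzero element of $T^1_{X,P}$. Second, pinning down the group of global (infinitesimal) automorphisms of $X\times\operatorname{Spec} A_0$, together with the subgroup of those liftable over $A'$ near $P$, tightly enough to conclude that the obstruction survives for every gluing isomorphism $\phi$. Once both are in hand for one prime $p$, the same construction runs for all $p$, $\operatorname{Def}_X'$ provably violates $(H_1)$, and the Flenner--Kosarew question is answered in the negative.
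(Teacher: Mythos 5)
Your overall strategy is the same as the paper's: reduce failure of $(H_1)$ to the non-liftability of an infinitesimal automorphism at a singular point, using the extension $k[\lambda]/(\lambda^{p+1})\to k[\varepsilon]/(\varepsilon^{p})$ and the truncated exponential $y\mapsto y+\varepsilon$ of a vector field, which ceases to be an automorphism of the singular local ring once the $p$-th power of the parameter survives. The paper's surface example is exactly this: the $A_{p-1}$ point $xz=y^p$, the twist $y\mapsto y+\lambda$, and the obstruction read off by applying the hypothetical lift $\widetilde{\sigma}_P$ to the relation $xz-y^p$ and projecting to $k\otimes_k k[\lambda]$, where $-(y+\lambda+\cdots)^p$ leaves a residue $-\lambda^p\neq 0$. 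So the mechanism you identify is the right one, and your reduction to the singular point (via formal smoothness on the smooth locus and via Schlessinger's fibre-product description of liftings, the paper's Lemma 2.2) is sound.

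However, what you submit is a strategy, and the two steps you defer as ``the genuine obstacle'' are precisely where all the mathematical content lies; without them there is no proof. First, the obstruction must be shown to survive for \emph{every} admissible gluing isomorphism $\phi$, i.e.\ for every automorphism $\sigma$ of the trivial deformation over $A_0$. The paper does this by classifying such $\sigma$ globally (Lemmas 3.1 and 4.1): it computes $H^0$ of the structure sheaf of the deformation over each affine piece and shows that any $\sigma$ can only perturb the key coordinate by elements of the maximal ideal (times $\varepsilon$), so the unit coefficient of $\lambda$ in $\widetilde{\sigma}_P(y)$ can never be cancelled. This is a global, not local, argument. Second, your proposed curve --- $\mathbb{P}^1$ with a single imposed singularity --- is likely \emph{insufficient}: in the paper's curve example (glued from $\operatorname{Spec}k[t^p,t^{p+1}]$ and $\operatorname{Spec}k[s^{2p+1},s^{2p+2}]$ along $t=s^{-1}$), the \emph{second} singular point at $s=0$ is needed precisely to force the constant term $a_0=0$ in $\sigma(t^{p+1})=t^{p+1}+\varepsilon(a_0+a_pt^p+\cdots)$; if the complement of the first singularity were smooth, a global automorphism with $a_0\neq 0$ could cancel the $1\otimes\varepsilon$ produced by the twist and the fibre product could become locally trivial after all. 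So ``global automorphism group too small'' is not a side condition to be checked at the end --- it dictates the construction of $X_0$ itself, and a generic choice of singular rational curve will not work. Until you exhibit explicit $X_0$, $D$, and the automorphism classification, the argument is a plausible programme rather than a proof.
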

This is going to be proved in Theorem 3.2 and Theorem 4.2.
Such examples seem to be constructed for the first time ever. In Theorem 1.1, the characteristic $p$ can be any positive prime. We cannot expect such examples to be smooth because if they were, their locally trivial deformation functors coincide with (not necessarily locally trivial) deformation functors. The proofs that they do not satisfy $(H_1)$ are based on non-liftability of certain kinds of infinitesimal deformation automorphisms in positive characteristic as pointed out in \cite[Remark 2.9]{BGL}. We consider locally trivial deformation $X''$ that restricts to the trivial deformation $X$ along a certain surjection of Artinian algebras such that, at a singular point $P$, they do not have simultaneous trivializations so that $\mathcal{O}_{X'',P}\to \mathcal{O}_{X,P}$ becomes isomorphic to the trivial restriction morphism. Section 3 and Section 4 are independent but based on common ideas.

\section{Preliminaries}
Let $k$ be a field. The symbol $\mathcal{A}$ denotes the category of Artinian local $k$-algebras with residue field $k$ whose morphisms are local $k$-algebra homomorphisms. A $k$-scheme is called \emph{algebraic} if it is separated and of finite type over $k$. Thus, an algebraic curve is an algebraic scheme of dimension one.\\

First, let us recall some basic definitions. Let $X$ be an algebraic scheme and $A\in \operatorname{ob}\mathcal{A}.$ A cartesian diagram of algebraic $k$-schemes

\[\begin{tikzcd}
	X & {\mathcal{X}} \\
	{\operatorname{Spec}k} & {\operatorname{Spec}A}
	\arrow[from=1-1, to=1-2]
	\arrow[from=1-1, to=2-1]
	\arrow[from=1-2, to=2-2]
	\arrow[from=2-1, to=2-2]
\end{tikzcd}\]

is called a \emph{deformation} of $X$ over $A$ if $\mathcal{X}\to \operatorname{Spec}A$ is flat and surjective. Note that $\mathcal{X}$ and $X$ have isomorphic underlying topological spaces. For simplicity, let $\mathcal{X}$ refer to the whole above diagram. Let 
\[\begin{tikzcd}
	X & {\mathcal{Y}} \\
	{\operatorname{Spec}k} & {\operatorname{Spec}A}
	\arrow[from=1-1, to=1-2]
	\arrow[from=1-1, to=2-1]
	\arrow[from=1-2, to=2-2]
	\arrow[from=2-1, to=2-2]
\end{tikzcd}\]
be another deformation of $X$ over $A$. We say that two deformations $\mathcal{X}$ and $\mathcal{Y}$ are \emph{isomorphic} if there exists a commutative diagram of algebraic $k$-schemes
\[\begin{tikzcd}
	& X && {\mathcal{X}} \\
	X && {\mathcal{Y}} \\
	& {\operatorname{Spec}k} && {\operatorname{Spec}A} \\
	{\operatorname{Spec}k} && {\operatorname{Spec}A}
	\arrow[from=1-2, to=1-4]
	\arrow[Rightarrow, no head, from=1-2, to=2-1]
	\arrow[from=1-2, to=3-2]
	\arrow["\simeq","\sigma"', from=1-4, to=2-3]
	\arrow[from=1-4, to=3-4]
	\arrow[from=2-1, to=2-3]
	\arrow[from=2-1, to=4-1]
	\arrow[from=2-3, to=4-3]
	\arrow[from=3-2, to=3-4]
	\arrow[Rightarrow, no head, from=3-2, to=4-1]
	\arrow[Rightarrow, no head, from=3-4, to=4-3]
	\arrow[from=4-1, to=4-3]
\end{tikzcd}\]

where $\sigma$ is an isomorphism. Let $\operatorname{Def}_X(A)$ denote the set $$\{\mathrm{deformations \ of\  }X\mathrm{\ over \ }A\}/\mathrm{isomorhisms}.$$
Deformations are compatible with pullbacks, and $\operatorname{Def}_X$ becomes a covariant functor $\mathcal{A}\to (\mathrm{sets}),$ which we call the \emph{deformation functor} of $X$. A deformation $\mathcal{X}$ of $X$ over $A$ is \emph{trivial} [resp. \emph{(formally) locally trivial}] if there exists a commutative diagram of algebraic $k$-schemes 
\[\begin{tikzcd}
	X & {\mathcal{X}} & X \\
	{\operatorname{Spec}k} & {\operatorname{Spec}A} & {\operatorname{Spec}k}
	\arrow[from=1-1, to=1-2]
	\arrow[from=1-1, to=2-1]
	\arrow[from=1-2, to=1-3]
	\arrow[from=1-2, to=2-2]
	\arrow[from=1-3, to=2-3]
	\arrow[from=2-1, to=2-2]
	\arrow[from=2-2, to=2-3]
\end{tikzcd}\]
where the two squares are cartesian [resp. if there exists an open cover $\{U_i\}$ for $X$ such that each deformation $\mathcal{X}|_{U_i}$ is trivial]. Locally trivial deformations (mod isomorphisms) consist a subfunctor  $\operatorname{Def}'_X$ of $\operatorname{Def}_X$, which we call the {\em locally trivial deformation functor} of $X$.
Let $F:\mathcal{A}\to (\mathrm{sets})$ be a covariant functor. we say that $F$ satisfies \emph{Schlessinger's condition $(H_1)$} if, for any cartesian diagram 
\[\begin{tikzcd}
	{\overline{A}} & {A'} \\
	{A''} & A
	\arrow[from=1-1, to=1-2]
	\arrow[from=1-1, to=2-1]
	\arrow[from=1-2, to=2-2]
	\arrow[from=2-1, to=2-2]
\end{tikzcd}\]
in $\mathcal{A}$, the induced canonical map
$$F(\overline{A})\to F(A'')\times_{F(A)} F(A')$$
is surjective whenever $A''\to A$ is a \emph{small extension} (i.e. surjective and its kernel is a one-dimensional vector space). From $(H_1)$, it follows that the above map is surjective if $A''\to A$ is \emph{surjective}. Our object is to construct an algebraic $k$-scheme $X_0$ such that $\operatorname{Def}'_{X_0}$ does not satisfy $(H_1)$.

Now, let us state some auxiliary lemmas.

\begin{lemma}\emph{(=\cite[Lemma 3.3]{Sch})}
Let $A$ be a ring, $J$ nilpotent ideal in $A$, and $u: M\to N$ a homomorphism of $A$-modules, with $N$ flat over $A$. If $\tilde{u}: M/JM\to N/JN$ is an isomorphism, then $u$ is an isomorphism.
\end{lemma}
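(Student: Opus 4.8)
The plan is to establish surjectivity and injectivity of $u$ separately, invoking flatness of $N$ only for injectivity; surjectivity will follow from a Nakayama-type argument that works for an arbitrary nilpotent ideal. Fix $n$ with $J^n = 0$.

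First I would show that $u$ is surjective. Since $\tilde u$ is surjective, $N = u(M) + JN$. Feeding this identity into itself, $N = u(M) + JN = u(M) + J\bigl(u(M) + JN\bigr) \subseteq u(M) + J^2 N$, hence $N = u(M) + J^2 N$, and iterating yields $N = u(M) + J^m N$ for all $m \geq 1$. Taking $m = n$ gives $N = u(M)$. This step uses neither flatness of $N$ nor any finiteness hypothesis on the modules.

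Using the surjectivity just proved, set $K = \ker u$, so that $0 \to K \to M \to N \to 0$ is exact. Applying $- \otimes_A A/J$ and using that $\operatorname{Tor}_1^A(N, A/J) = 0$ because $N$ is flat over $A$, I obtain an exact sequence $0 \to K/JK \to M/JM \to N/JN \to 0$ in which the second map is precisely $\tilde u$. Since $\tilde u$ is an isomorphism, it is injective, so $K/JK = 0$, i.e. $K = JK$. Iterating once more, $K = JK = J^2 K = \cdots = J^n K = 0$, so $u$ is injective. Combined with surjectivity, $u$ is an isomorphism.

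I do not expect a genuine obstacle here: the only points deserving care are the correct use of flatness — namely that it forces $\operatorname{Tor}_1^A(N, A/J) = 0$ and hence exactness of the reduced sequence — and the observation that the concluding Nakayama step for a nilpotent ideal requires no finite-generation hypothesis on $K$, which is exactly what allows the lemma to hold in the stated generality.
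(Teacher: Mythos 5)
Your proof is correct and is essentially the standard argument for this lemma, which the paper does not reprove but simply cites from Schlessinger (Lemma 3.3): surjectivity via the nilpotent-ideal Nakayama iteration, then injectivity by tensoring the kernel sequence with $A/J$ and using flatness of $N$ to kill $\operatorname{Tor}_1^A(N,A/J)$. No issues.
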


\begin{definition}
    Let $A''\to A, A'\to A$ be morphisms in $\mathcal{A}$ with $A''\to A$ surjective, $X_0$ an algebraic $k$-scheme, $X,X'',X'$ be deformations of $X_0$ over $A,A'',A'$ respectively such that the pullback of $X''$and $X'$ to $A$ is isomorphic to $X$ over $A$. Let $\overline{A}=A''\times_AA'$. Let $\alpha :\operatorname{Def}_{X_0}(\overline{A})\to \operatorname{Def}_{X_0}(A'')\times_{\operatorname{Def}_{X_0}(A)}\operatorname{Def}_{X_0}(A')$ be the canonical map and $[X''],[X']$ denote the isomorphism classes of $X'',X'$in $\operatorname{Def}_{X_0}(A''),\operatorname{Def}_{X_0}(A')$. If $\sigma: \mathcal{O}_{X''}\otimes A \to \mathcal{O}_{X'}\otimes A $ is an $A$-algebra isomorphism inducing $\operatorname{id}:\mathcal{O}_X \to \mathcal{O}_X $, then we denote $O_{X''}\times_{\sigma}O_{X'}$ the fiber product of $O_{X''}\to \mathcal{O}_{X''}\otimes A \xrightarrow[]{\sigma} \mathcal{O}_{X'}\otimes A $ and $\mathcal{O}_{X'} \to \mathcal{O}_{X'}\otimes A.$ 
    
\end{definition}

Note that $O_{X''}\times_{\sigma}O_{X'}$ as above induces a deformation over $\overline{A}$, i.e, it is flat over $\overline{A}$ by \cite[Lemma 3.4]{Sch}.

\begin{lemma}
In the above situation, for any $[\overline{X}]\in \alpha^{-1}([X''],[X'])$, $\mathcal{O}_{\bar{X}}$ is isomorphic to $\mathcal{O}_{X''}\times_{\sigma}\mathcal{O}_{X'}$ over $\bar{A}$ for some $\sigma$.

  \begin{proof} Choose isomorphisms $f'':\mathcal{O}_{\bar{X}}\otimes A'' \to \mathcal{O}_{X''}$ and $g':\mathcal{O}_{\bar{X}}\otimes A' \to \mathcal{O}_X$. They reduce to $f:\mathcal{O}_{\bar{X}}\otimes A \to \mathcal{O}_{X''}\otimes A$ and $g :\mathcal{O}_{\bar{X}}\otimes A \to \mathcal{O}_X \otimes A$. Set $\sigma :=g\circ f^{-1}$. Then, we have a canonical morphism $\mathcal{O}_{\bar{X}}\to \mathcal{O}_{X''}\times_{\sigma}\mathcal{O}_{X'}$ which is an isomorphism by \cite[Lemma 3.4]{Sch}; the assumption that $A''\to A$ is surjective is needed to apply this lemma.
\[\begin{tikzcd}
	&& {} \\
	& {\mathcal{O}_{\bar{X}}} && {\mathcal{O}_{\bar{X}} \otimes A'} \\
	{\mathcal{O}_{\bar{X}} \otimes A''} && {\mathcal{O}_{\bar{X}} \otimes A} \\
	&&& {\mathcal{O}_{X'}} \\
	{\mathcal{O}_{X''}} & {\mathcal{O}_{X''} \otimes A} & {\mathcal{O}_{X'} \otimes A} \\
	&& {}
	\arrow[from=2-2, to=2-4]
	\arrow[from=2-2, to=3-1]
	\arrow[from=2-4, to=3-3]
	\arrow["\simeq","g'"', from=2-4, to=4-4]
	\arrow[from=3-1, to=3-3]
	\arrow["\simeq"',"f''", from=3-1, to=5-1]
	\arrow["f","\simeq"', curve={height=6pt}, from=3-3, to=5-2]
	\arrow["g"',"\simeq", from=3-3, to=5-3]
	\arrow[from=4-4, to=5-3]
	\arrow[from=5-1, to=5-2]
	\arrow["\sigma"', dashed, from=5-2, to=5-3]
\end{tikzcd}\] \end{proof}
\end{lemma}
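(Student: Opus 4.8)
The plan is to exploit that $[\bar X]$ maps to $([X''],[X'])$ under $\alpha$, which by definition means that the restrictions $\bar X\times_{\operatorname{Spec}\bar A}\operatorname{Spec}A''$ and $\bar X\times_{\operatorname{Spec}\bar A}\operatorname{Spec}A'$ are isomorphic, \emph{as deformations of $X_0$}, to $X''$ and $X'$ respectively. So first I would fix isomorphisms of deformations $f'':\mathcal{O}_{\bar X}\otimes_{\bar A}A''\xrightarrow{\ \sim\ }\mathcal{O}_{X''}$ and $g':\mathcal{O}_{\bar X}\otimes_{\bar A}A'\xrightarrow{\ \sim\ }\mathcal{O}_{X'}$; being isomorphisms of deformations, each induces the identity on the closed fibre $\mathcal{O}_{X_0}$. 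These are the only choices made in the argument, and they are exactly what accounts for the phrase ``for some $\sigma$'' in the statement: the class $[\bar X]$ only pins down its restrictions up to isomorphism, so $\sigma$ cannot be prescribed in advance.

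Next I would base change $f''$ and $g'$ along $A''\to A$ and $A'\to A$ to get $A$-algebra isomorphisms $f:\mathcal{O}_{\bar X}\otimes_{\bar A}A\xrightarrow{\ \sim\ }\mathcal{O}_{X''}\otimes A$ and $g:\mathcal{O}_{\bar X}\otimes_{\bar A}A\xrightarrow{\ \sim\ }\mathcal{O}_{X'}\otimes A$, and set $\sigma:=g\circ f^{-1}$, an $A$-algebra isomorphism $\mathcal{O}_{X''}\otimes A\to\mathcal{O}_{X'}\otimes A$. Since $f$ and $g$ both reduce to the identity on $\mathcal{O}_{X_0}$, so does $\sigma$; hence $\sigma$ is an admissible choice in the sense of the preceding definition, and $\mathcal{O}_{X''}\times_\sigma\mathcal{O}_{X'}$ is defined and is a deformation over $\bar A$ by the remark immediately after that definition.

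Then I would build the comparison map. For a local section $x$ of $\mathcal{O}_{\bar X}$ the pair $\bigl(f''(x\otimes1),\,g'(x\otimes1)\bigr)$ lies in $\mathcal{O}_{X''}\times_\sigma\mathcal{O}_{X'}$, because the two components have a common image in $\mathcal{O}_{X'}\otimes A$ after applying $\sigma$ — this is precisely the relation $\sigma\circ f=g$ together with the compatibility of $f'',g'$ with the base-change maps. The universal property of the fibre product then yields a canonical morphism of sheaves of $\bar A$-algebras $u:\mathcal{O}_{\bar X}\to\mathcal{O}_{X''}\times_\sigma\mathcal{O}_{X'}$. To conclude that $u$ is an isomorphism I would quote \cite[Lemma~3.4]{Sch}: since $A''\to A$ is surjective and $\mathcal{O}_{X''}$, $\mathcal{O}_{X'}$ are flat over $A''$, $A'$ with a common restriction over $A$, the fibre product is flat over $\bar A=A''\times_AA'$ with base changes $\mathcal{O}_{X''}$ over $A''$ and $\mathcal{O}_{X'}$ over $A'$, and $u$ induces the identity on these base changes, hence is an isomorphism. (Alternatively one can verify this by hand: apply the quoted \cite[Lemma~3.3]{Sch} with the nilpotent ideal $J=\mathfrak{m}_{\bar A}$; by flat base change both source and target reduce modulo $\mathfrak{m}_{\bar A}$ to $\mathcal{O}_{X_0}$, and $u$ reduces to $\operatorname{id}_{\mathcal{O}_{X_0}}$.) Therefore $\mathcal{O}_{\bar X}\cong\mathcal{O}_{X''}\times_\sigma\mathcal{O}_{X'}$ over $\bar A$.

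I expect the main obstacle to be essentially bookkeeping: keeping straight which identifications are fixed data and which are being constructed, so that $f''$ and $g'$ really are isomorphisms \emph{of deformations} — this is what makes them, and hence $\sigma$, the identity on $\mathcal{O}_{X_0}$, which in turn is what makes $\sigma$ and the fibre product $\mathcal{O}_{X''}\times_\sigma\mathcal{O}_{X'}$ meaningful. The only genuinely non-formal input is that $u$ is an \emph{isomorphism} and not merely a morphism, and this is exactly where the hypothesis that $A''\to A$ be surjective enters, via \cite[Lemma~3.4]{Sch}; without surjectivity the canonical map can fail to be an isomorphism. Everything else is a diagram chase that may be carried out affine-locally on the common underlying space $|X_0|$.
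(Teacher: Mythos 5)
Your proposal is correct and follows essentially the same route as the paper: fix isomorphisms of deformations $f''$ and $g'$, reduce them over $A$ to define $\sigma := g\circ f^{-1}$, and conclude via the canonical map to the fibre product together with \cite[Lemma 3.4]{Sch}, using the surjectivity of $A''\to A$. The additional details you supply (that $\sigma$ induces the identity on $\mathcal{O}_{X_0}$, and the alternative check via \cite[Lemma 3.3]{Sch}) are consistent elaborations of the paper's argument rather than a different approach.
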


\section{Example I: Singular curve}
In this section, we assume that the base field $k$ is of characteristic $p>0$.
The goal is to construct an algebraic curve $X_0$ and to show that it does not satisfy Schlessinger's criterion $(H_1)$.\\
Let $R=k[t^p, t^{p+1}]\subset k[t]$, $S=k[s^{2p+1},s^{2p+2}]\subset k[s]$, $U=\operatorname{Spec}R, V=\operatorname{Spec}S$. We glue $U$ and $V$ along $T:=k[t,t^{-1}]=k[s,s^{-1}]$ by $t=s^{-1}$, to obtain $$X_0=U\cup V.$$
As the images of $R$ and $S$ in $T$ generate $T$, we see that $X_0$ is separated.

Let $A:=k[\varepsilon]$, $A'=A''=k[\lambda]$ and $\pi :k[\lambda] \to k[\varepsilon] $ be defined by $\varepsilon^2=0, \lambda^{p+1}=0$, $\lambda \mapsto \varepsilon$.

Let us consider the following cartesian diagram in $\mathcal{A}$:

\[\begin{tikzcd}
	{\overline{A}} & {k[\lambda]} \\
	{k[\lambda]} & {k[\varepsilon]}
	\arrow[from=1-1, to=1-2]
	\arrow[from=1-1, to=2-1]
	\arrow["\pi", from=1-2, to=2-2]
	\arrow["\pi"', from=2-1, to=2-2]
\end{tikzcd}\]

To show that $X_0$ do not satisfy $(H_1)$, it suffices to construct locally trivial deformations of $X_0$ over $k[\varepsilon]$, $k[\lambda]$, $k[\lambda]$ which fit in the above diagram and which do not lift to any locally trivial deformation over $\overline{A}$.
Let $X:=X_0[\varepsilon]$ and $X'=X_0[\lambda]$ be trivial deformations and $X_0[\lambda]\to X_0[\varepsilon]$ be defined by $\pi$. To construct $X''$, we glue $R[\lambda]:=R\otimes_k k[\lambda]=k[t^p,t^{p+1},\lambda]$ and $S[\lambda]=k[s^{2p+1},s^{2p+2},\lambda]$ by $$k[t,t^{-1},\lambda]\to k[s,s^{-1},\lambda], (t,\lambda)\mapsto (s^{-1}+\lambda s^p,\lambda).$$ 
This is indeed an isomorphism because $s^{-1}+\lambda s^p=s^{-1}(1+\lambda s^{p+1})$ and  $1+\lambda s^{p+1}$ is a unit. Thus, $X''$ is a locally trivial $k[\lambda]$-deformation. (At this point, we have not used the characteristic $p$ assumption.) Moreover, the following commutative diagram shows that $X''$ pulls back to $X$:

\[\begin{tikzcd}
	{k[t^p,t^{p+1},\lambda]} & {k[t,t^{-1},\lambda]} &&& {k[s,s^{-1},\lambda]} & {k[s^{2p+1},s^{2p+2},\lambda]} \\
	{k[t^p,t^{p+1},\varepsilon]} & {k[t,t^{-1},\varepsilon]} &&& {k[s,s^{-1},\varepsilon]} & {k[s^{2p+1},s^{2p+2},\varepsilon]} \\
	{k[t^p,t^{p+1},\varepsilon]} & {k[t,t^{-1},\varepsilon]} &&& {k[s,s^{-1},\varepsilon]} & {k[s^{2p+1},s^{2p+2},\varepsilon]}
	\arrow[hook, from=1-1, to=1-2]
	\arrow["{\operatorname{id}\otimes \pi}", from=1-1, to=2-1]
	\arrow["{(t,\lambda)\mapsto (s^{-1}+\lambda s^{p},\lambda)}","\simeq"', from=1-2, to=1-5]
	\arrow["{\operatorname{id}\otimes \pi}", from=1-2, to=2-2]
	\arrow["{\operatorname{id}\otimes \pi}", from=1-5, to=2-5]
	\arrow[hook', from=1-6, to=1-5]
	\arrow["{\operatorname{id}\otimes \pi}", from=1-6, to=2-6]
	\arrow[hook, from=2-1, to=2-2]
	\arrow["\alpha","\simeq"', from=2-1, to=3-1]
	\arrow["{(t,\varepsilon)\mapsto (s^{-1}+\varepsilon s^{p},\varepsilon)}","\simeq"', from=2-2, to=2-5]
	\arrow["\beta","\simeq"', from=2-2, to=3-2]
	\arrow[Rightarrow, no head, from=2-5, to=3-5]
	\arrow[hook', from=2-6, to=2-5]
	\arrow[Rightarrow, no head, from=2-6, to=3-6]
	\arrow[hook, from=3-1, to=3-2]
	\arrow["{(t,\varepsilon)\mapsto (s^{-1},\varepsilon)}","\simeq"', from=3-2, to=3-5]
	\arrow[hook', from=3-6, to=3-5]
\end{tikzcd}\]

where $$\alpha:(t^p,t^{p+1},\varepsilon)\mapsto (t^p,t^{p+1}+\varepsilon,\varepsilon), $$ $$ \beta : (t,\varepsilon)\to (t+\frac{\varepsilon}{t^p},\varepsilon).$$

To see that $\alpha$ is well-defined, note that $\beta$ (which is well-defined) takes $k[t^p,t^{p+1},\varepsilon]$ into $k[t^p,t^{p+1},\varepsilon]$, using the characteristic $p$ assumption. Again, $\alpha$ and $\beta$ are isomorphisms by Lemma 2.1. The above diagram shows that we can identify $\mathcal{O}_{X''}\to \mathcal{O}_{X''}\otimes k[\varepsilon]$ with $f:\mathcal{O}_{X''}\to \mathcal{O}_{X}$ where $f$ is defined by gluing the vertical morphisms of
\[\begin{tikzcd}
	{k[t^p,t^{p+1},\lambda]} & {k[t,t^{-1},\lambda]} &&& {k[s,s^{-1},\lambda]} & {k[s^{2p+1},s^{2p+2},\lambda]} \\
	{k[t^p,t^{p+1},\varepsilon]} & {k[t,t^{-1},\varepsilon]} &&& {k[s,s^{-1},\varepsilon]} & {k[s^{2p+1},s^{2p+2},\varepsilon]}
	\arrow[hook, from=1-1, to=1-2]
	\arrow["{\alpha\circ (\operatorname{id}\otimes \pi)}", from=1-1, to=2-1]
	\arrow["{(t,\lambda)\mapsto (s^{-1}+\lambda s^{p},\lambda)}", from=1-2, to=1-5]
	\arrow["{\beta\circ (\operatorname{id}\otimes \pi)}", from=1-2, to=2-2]
	\arrow["{\operatorname{id}\otimes \pi}", from=1-5, to=2-5]
	\arrow[hook', from=1-6, to=1-5]
	\arrow["{\operatorname{id}\otimes \pi}", from=1-6, to=2-6]
	\arrow[hook, from=2-1, to=2-2]
	\arrow["{(t,\lambda)\mapsto (s^{-1},\varepsilon)}", from=2-2, to=2-5]
	\arrow[hook', from=2-6, to=2-5]
\end{tikzcd}\]

\begin{lemma} If $\sigma: \mathcal{O}_X\to \mathcal{O}_X¥$ is a $k[\varepsilon]$-algebra isomorphism inducing $\operatorname{id}:\mathcal{O}_{X_0} \to \mathcal{O}_{X_0} $, then we have 
$$\sigma_U(t^p)=t^p,$$
$$\sigma_U(t^{p+1})-t^{p+1}\in \varepsilon t^p(k[t^p,t^{p+1},\varepsilon])+ \varepsilon t^{p+1}(k[t^p,t^{p+1},\varepsilon])\subset k[t^p,t^{p+1},\varepsilon].$$

\begin{proof}
First, note that the sections of $\mathcal{O}_{X}$ over non-empty open subsets can be identified with their image in the stalk of $\mathcal{O}_{X}$ at the generic point $\eta$. In particular, $\sigma(t)$ is of the form $t+ x\varepsilon, x\in \mathcal{O}_{X_0,\eta}$ so that $\sigma(t^p)=(t+ x\varepsilon)^p=t^p+pt^{p-1}x\varepsilon=t^p.$ On the other hand, as $\sigma_U(t^{p+1})\in H^0(U,\mathcal{O}_X)$, it is of the form $$\sigma(t^{p+1})=t^{p+1}+\varepsilon(a_0+a_pt^p+a_{p+1}t^{p+1}+\mathrm{higher \ order \ terms}).$$
We obtain $$\sigma(t)=\sigma(t^{p+1}/t^p)=t+\varepsilon(a_0t^{-p}+a_p+a_{p+1}t+\mathrm{higher \ order \ terms}).$$ We can replace $t, U$ by $s, V$ in the above argument and $$\sigma(s^{2p})=s^{2p},$$ $$\sigma(s^{2p+1})=s^{2p+1}+\varepsilon(b_0+b_{2p+1}s^{2p+1}+b_{2p+2}s^{2p+2}+b_{4p+2}s^{4p+2}+\mathrm{higher \ order \ terms}).$$ We can compute the inverse to the latter expression as $$\sigma(s^{2p+1})^{-1}=\frac{1}{s^{2p+1}}-\frac{\varepsilon}{s^{4p+2}}(b_0+b_{2p+1}s^{2p+1}+b_{2p+2}s^{2p+2}+b_{4p+2}s^{4p+2}+\mathrm{higher \ order \ terms}).$$ We can combine these equations to obtain another expression for $\sigma(t)$:
$$
        \begin{aligned}
\sigma(t)&=\sigma(s^{-1})=\sigma(\frac{s^{2p}}{s^{2p+1}}) \\ &=\frac{s^{2p}}{s^{2p+1}}-\frac{s^{2p}\varepsilon}{s^{4p+2}}(b_0+b_{2p+1}s^{2p+1}+b_{2p+2}s^{2p+2}+b_{4p+2}s^{4p+2}+\mathrm{higher \ order \ terms}) \\
&=t+\varepsilon(b_0t^{2p+2}+b_{2p+1}t+b_{2p+2}+b_{4p+2}t^{-2p}+\mathrm{lower \ order \ terms}). \\
        \end{aligned}
$$
In particular, we have $a_0=0$ and the conclusion follows.

\end{proof}
\end{lemma}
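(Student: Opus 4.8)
The plan is to view every section of $\mathcal{O}_X$ over a non-empty open set as an element of the stalk $\mathcal{O}_{X,\eta}$ at the generic point $\eta$ of $X_0$. Since $X_0$ is integral and $k[\varepsilon]$ is $k$-free, $\mathcal{O}_{X,\eta}=k(t)[\varepsilon]$ and the restriction maps from $H^0(U,\mathcal{O}_X)=R[\varepsilon]$, $H^0(V,\mathcal{O}_X)=S[\varepsilon]$ and $H^0(U\cap V,\mathcal{O}_X)=k[t,t^{-1},\varepsilon]$ are injective. Because $\sigma$ is a sheaf automorphism over the fixed space $X_0$, it preserves each of these three subrings, and because it reduces to $\operatorname{id}$ modulo $\varepsilon$ we may write $\sigma(t)=t+\varepsilon x$ with $x\in k(t)$. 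The first assertion is then immediate: $\sigma(t^p)=(t+\varepsilon x)^p=t^p$, using $p=0$ in $k$ and $\varepsilon^2=0$. This is the first point at which the characteristic intervenes.

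For the second assertion I would compute $\sigma(t)\in k[t,t^{-1},\varepsilon]$ in two ways and compare. On $U$, write $\sigma(t^{p+1})=t^{p+1}+\varepsilon r$ with $r=a_0+a_pt^p+a_{p+1}t^{p+1}+(\text{terms }t^n,\ n\geq 2p)\in R$; dividing by $\sigma(t^p)=t^p$ gives $\sigma(t)=t+\varepsilon\,rt^{-p}$, whose $\varepsilon$-coefficient is $a_0t^{-p}$ plus a $k$-combination of non-negative powers of $t$. On $V$, the same exponent trick gives $\sigma(s^{2p})=s^{2p}$, and writing $\sigma(s^{2p+1})=s^{2p+1}+\varepsilon w$ with $w=b_0+b_{2p+1}s^{2p+1}+b_{2p+2}s^{2p+2}+b_{4p+2}s^{4p+2}+(\text{higher order})\in S$, the identity $\sigma(t)=\sigma(s^{2p})\sigma(s^{2p+1})^{-1}=t-\varepsilon\,w\,s^{-(2p+2)}$ together with $s^{-1}=t$ expresses the $\varepsilon$-coefficient of $\sigma(t)$ as a $k$-combination of $t^{2p+2}$, $t$, $1$ and powers $t^m$ with $m\leq -2p$.

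The crux — and the one place the particular choice of $X_0$ is essential — is the comparison of these two expansions: the $U$-side produces the term $a_0t^{-p}$, whereas the $V$-side produces no $t^{-p}$ term at all, since $2p+2-c=-p$ would force $c=3p+2$, and one checks directly that $3p+2\notin\langle 2p+1,2p+2\rangle$ (for $p\geq 1$ the numerical semigroup $\langle 2p+1,2p+2\rangle$ has no element strictly between $2p+2$ and $4p+2$). Matching coefficients of $t^{-p}$ therefore forces $a_0=0$. It then remains to note that any element of $R$ with vanishing constant term lies in the ideal $t^pR+t^{p+1}R$ — split a monomial $t^{ap+b(p+1)}$ with $a+b\geq 1$ according to whether $a\geq 1$ (factor out $t^p$) or $a=0$ (factor out $t^{p+1}$) — so that $\sigma_U(t^{p+1})-t^{p+1}=\varepsilon r\in\varepsilon t^p(k[t^p,t^{p+1},\varepsilon])+\varepsilon t^{p+1}(k[t^p,t^{p+1},\varepsilon])$. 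The only thing requiring care is the bookkeeping of which exponents the two semigroups allow in $r$ and in $w$, so that the suppressed ``higher/lower order'' terms can never contribute to the $t^{-p}$ slot; with that in hand the argument is routine.
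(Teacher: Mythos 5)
Your proposal is correct and follows essentially the same route as the paper: compute $\sigma(t)$ once from $U$ (producing the lone negative-exponent term $a_0t^{-p}$) and once from $V$ (where no $t^{-p}$ term can occur), and conclude $a_0=0$. You merely make explicit two points the paper leaves implicit — that $3p+2\notin\langle 2p+1,2p+2\rangle$ so the suppressed terms cannot hit the $t^{-p}$ slot, and that elements of $R$ with zero constant term lie in $t^pR+t^{p+1}R$ — both of which check out.
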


The following is the main theorem in this section.
\begin{theorem}
Let $A,A',A'',\overline{A},X_0,X,X'$ and $X''$ as constructed above. Then, the pair $(X'', X')$ does not lift to any locally trivial deformation over $\overline{A}$. In particular, $X_0$ is a curve in positive characteristic whose locally trivial deformation functor does not satisfy Schlessinger's condition $(H_1)$.
\end{theorem}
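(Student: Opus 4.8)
The plan is to argue by contradiction. Suppose $\overline{X}$ is a locally trivial deformation of $X_0$ over $\overline{A}$ with $\overline{X}\otimes_{\overline{A}}k[\lambda]\cong X''$ and $\overline{X}\otimes_{\overline{A}}k[\lambda]\cong X'$ as deformations. Then $[\overline{X}]$ lies in $\alpha^{-1}([X''],[X'])$ for the (unrestricted) deformation functor, so Lemma~2.2 gives a $k[\varepsilon]$-algebra isomorphism $\sigma$, inducing $\operatorname{id}$ on $\mathcal{O}_{X_0}$, with $\mathcal{O}_{\overline{X}}\cong\mathcal{O}_{X''}\times_{\sigma}\mathcal{O}_{X'}$. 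Unwinding this using the identification of the restriction $\mathcal{O}_{X''}\to\mathcal{O}_{X''}\otimes k[\varepsilon]$ with the explicit map $f$ constructed above — whose component over $U$ sends $t^p\mapsto t^p$, $t^{p+1}\mapsto t^{p+1}+\varepsilon$, $\lambda\mapsto\varepsilon$ — and the obvious identification $\mathcal{O}_{X'}\otimes k[\varepsilon]\cong\mathcal{O}_X$, I will rewrite $\mathcal{O}_{\overline{X}}$ as the fibre product of $\mathcal{O}_{X''}\xrightarrow{f}\mathcal{O}_X$ and $\mathcal{O}_{X'}\xrightarrow{h}\mathcal{O}_X$, where $h$ is the trivial restriction followed by an automorphism $\tau$ of the trivial deformation $X$ which induces $\operatorname{id}$ on $X_0$. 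The point is that $\tau$ is not free: by Lemma~3.1 applied to $\tau$, the component $h_U$ must be $t^p\mapsto t^p$, $t^{p+1}\mapsto t^{p+1}+\varepsilon\delta$, $\lambda\mapsto\varepsilon$ for some $\delta\in\mathfrak{m}:=(t^p,t^{p+1})R$; in particular $\delta$ has no constant term.

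Next I restrict to the chart $U=\operatorname{Spec}R$ and its singular point $P$ (the origin), and localize. Since taking sections over $U$ and localizing at $P$ commute with the fibre product, $\mathcal{O}_{\overline{X},P}\cong R_{\mathfrak{m}}[\lambda]\times_{R_{\mathfrak{m}}[\varepsilon]}R_{\mathfrak{m}}[\lambda]$ via the localized maps $f_{U,P}$ and $h_{U,P}$. As $\overline{X}$ is locally trivial, it is trivial on some neighbourhood of $P$; hence $\mathcal{O}_{\overline{X},P}$ is a trivial deformation of $R_{\mathfrak{m}}$ over $\overline{A}$, and there is a $k$-algebra section $s\colon R_{\mathfrak{m}}\to\mathcal{O}_{\overline{X},P}$ of the reduction map. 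Write $T_p=s(t^p)=(a,b)$ and $T_{p+1}=s(t^{p+1})=(c,d)$ with $a,b,c,d\in R_{\mathfrak{m}}[\lambda]$, so that the $\lambda$-constant terms of $a,b$ are $t^p$ and those of $c,d$ are $t^{p+1}$. Since $s$ is a ring homomorphism and $(t^{p+1})^p=(t^p)^{p+1}$ in $R_{\mathfrak{m}}$, we obtain $c^p=a^{p+1}$ and $d^p=b^{p+1}$ in $R_{\mathfrak{m}}[\lambda]$.

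These two facts are incompatible. Writing $c=t^{p+1}+\lambda c_1+\cdots$ and $d=t^{p+1}+\lambda d_1+\cdots$ with $c_1,d_1\in R_{\mathfrak{m}}$, the fibre-product relation $f_{U,P}(c)=h_{U,P}(d)$, compared in the coefficient of $\varepsilon$, gives $1+c_1=\delta+d_1$, so $d_1-c_1=1-\delta$ has nonzero constant term. On the other hand, expanding $c^p=a^{p+1}$ and $d^p=b^{p+1}$ — here the Frobenius, hence the hypothesis $\operatorname{char}k=p$, is essential — together with $\lambda^{p+1}=0$ forces, comparing coefficients degree by degree, $a_1=\cdots=a_{p-1}=0$, $b_1=\cdots=b_{p-1}=0$, and then $c_1^p=t^{p^2}a_p$ and $d_1^p=t^{p^2}b_p$. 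Since $t^{p^2}=(t^p)^p\in\mathfrak{m}$ and $\mathfrak{m}R_{\mathfrak{m}}$ is prime, this forces $c_1,d_1\in\mathfrak{m}R_{\mathfrak{m}}$, so $d_1-c_1$ has zero constant term — a contradiction. Thus $(X'',X')$ admits no locally trivial lift over $\overline{A}$. As $\pi\colon k[\lambda]\to k[\varepsilon]$ is surjective and the pair $([X''],[X'])$ lies in $\operatorname{Def}'_{X_0}(k[\lambda])\times_{\operatorname{Def}'_{X_0}(k[\varepsilon])}\operatorname{Def}'_{X_0}(k[\lambda])$ (both entries restrict to $[X]$ over $k[\varepsilon]$ by the diagrams above), the canonical map from $\operatorname{Def}'_{X_0}(\overline{A})$ is not surjective, so $\operatorname{Def}'_{X_0}$ fails $(H_1)$.

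The step I expect to be the main obstacle is the first paragraph: one must unwind Lemma~2.2 carefully enough to see that the residual ambiguity in $\mathcal{O}_{\overline{X}}=\mathcal{O}_{X''}\times_\sigma\mathcal{O}_{X'}$ is exactly an automorphism of the \emph{trivial} deformation $X$, and then recognise that Lemma~3.1 prevents that automorphism from undoing the ``twist by the unit $1$'' built into $\alpha$, and hence into $f$ — conceptually, this twist can be trivialised over $k[\varepsilon]$ but not over $k[\lambda]$. Once this is in place, the Frobenius computation of the third paragraph is essentially routine, although small primes need a quick case check in the degree-by-degree expansion.
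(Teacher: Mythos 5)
Your proposal is correct and follows essentially the same route as the paper: reduce via Lemma~2.2 to a fibre product $\mathcal{O}_{X''}\times_\sigma\mathcal{O}_{X'}$, localize at the singular point, use the $k$-algebra section coming from local triviality together with the constraint of Lemma~3.1, and derive a contradiction from the relation $(t^p)^{p+1}=(t^{p+1})^p$ in characteristic $p$ with $\lambda^{p+1}=0$. The only (harmless) difference is bookkeeping: the paper composes the two projections of the section into a single $k[\lambda]$-automorphism $\widetilde{\alpha}_t$ lifting $\sigma_t\circ\alpha_t$ and evaluates the relation at the closed point to get $-\lambda^p\neq 0$, whereas you keep the two components $(a,b),(c,d)$ separate, use the Frobenius relation on each to force $c_1,d_1\in\mathfrak{m}R_{\mathfrak{m}}$, and contradict the gluing identity $d_1-c_1=1-\delta$.
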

\begin{proof} By Lemma 2.2, it suffices to show that for any $\sigma$ as in Lemma 3.1, the fiber product $\mathcal{O}_{X''}\times_\sigma \mathcal{O}_{X'}$ is not locally trivial. So, let us suppose $\mathcal{O}_{X''}\times_\sigma \mathcal{O}_{X'}$ is locally trivial. We are going to show this leads to contradiction. Let us denote by $R_t$ [resp. $\overline{R_t}$] the stalk of $\mathcal{O}_{X_0}$ [$\mathcal{O}_{X''}\times_\sigma \mathcal{O}_{X'}$] at the point $t=0$. $R_t$ is the localization of $k[t^p,t^{p+1}]$ at $(t^p,t^{p+1})$. By the triviality, we have a $k$-section $g: R_t\to \overline{R_t}$ for the canonical morphism $\overline{R_t}\to R_t$. We have the following commutative diagram:

\[\begin{tikzcd}
	{R_t} \\
	& {\overline{R_t}} && {R_t\otimes_kk[\lambda]} \\
	& {R_t\otimes_kk[\lambda]} & {R_t\otimes k[\varepsilon]} & {R_t\otimes k[\varepsilon]}
	\arrow["g"{description}, from=1-1, to=2-2]
	\arrow["{\operatorname{pr}_2\circ g}", curve={height=-12pt}, dashed, from=1-1, to=2-4]
	\arrow["{\operatorname{pr}_1\circ g}"', curve={height=18pt}, dashed, from=1-1, to=3-2]
	\arrow["{\operatorname{pr}_2}", from=2-2, to=2-4]
	\arrow["{\operatorname{pr}_1}", from=2-2, to=3-2]
	\arrow["{\operatorname{id}\otimes \pi}"', from=2-4, to=3-4]
	\arrow["{f_t}", from=3-2, to=3-3]
	\arrow["{\sigma_t}", from=3-3, to=3-4]
\end{tikzcd}\]

where $f_t,\sigma_t$ is the stalk of $f,\sigma$ at $t=0$. By tensoring $\operatorname{id}_{k[\lambda]}$ with $\operatorname{pr}_i\circ g$ and factoring $f$ as $\alpha \circ (\operatorname{id}\otimes \pi)$ at $t=0$,

\[\begin{tikzcd}
	{R_t\otimes_kk[\lambda]} &&& {R_t\otimes_kk[\lambda]} \\
	\\
	{R_t\otimes_kk[\lambda]} & {R_t\otimes_kk[\varepsilon]} & {R_t\otimes_kk[\varepsilon]} & {R_t\otimes_kk[\varepsilon]}
	\arrow["{(\operatorname{pr}_2\circ g )\otimes \operatorname{id}_{k[\lambda]}}", from=1-1, to=1-4]
	\arrow["{(\operatorname{pr}_1\circ g )\otimes \operatorname{id}_{k[\lambda]}}"', from=1-1, to=3-1]
	\arrow["{\operatorname{id}\otimes \pi}"', from=1-4, to=3-4]
	\arrow["{\operatorname{id}\otimes \pi}"', from=3-1, to=3-2]
	\arrow["{\alpha_t}"', from=3-2, to=3-3]
	\arrow["{\sigma_t}"', from=3-3, to=3-4]
\end{tikzcd}\]

Here, $(\operatorname{pr}_i\circ g )\otimes \operatorname{id}_{k[\lambda]}$ are $k[\lambda]$-isomorhisms by lemma 2.1. Hence, $\sigma_t\circ \alpha_t$ lifts to $\tilde{\alpha}_t$, a $k[\lambda]$-automorphism of $R_t\otimes_kk[\lambda]$:
\[\begin{tikzcd}
	&&& {R_t\otimes_kk[\lambda]} \\
	\\
	{R_t\otimes_kk[\lambda]} & {R_t\otimes_kk[\varepsilon]} & {R_t\otimes_kk[\varepsilon]} & {R_t\otimes_kk[\varepsilon]}
	\arrow["{\operatorname{id}\otimes \pi}"', from=1-4, to=3-4]
	\arrow["{\widetilde{\alpha}_t}"', curve={height=-24pt}, dashed, from=3-1, to=1-4]
	\arrow["{\operatorname{id}\otimes \pi}"', from=3-1, to=3-2]
	\arrow["{\alpha_t}"', from=3-2, to=3-3]
	\arrow["{\sigma_t}"', from=3-3, to=3-4]
\end{tikzcd}\]

Now, by lemma 3.1, $t^p\otimes 1, t^{p+1}\otimes 1 \in R_t\otimes_kk[\lambda]$ go along the above diagram as

\[\begin{tikzcd}
	&&& {t^p\otimes1+x} \\
	\\
	{t^p\otimes1} & {t^p\otimes1} & {t^p\otimes1} & {t^p\otimes1}
	\arrow["{\operatorname{id}\otimes \pi}"', maps to, from=1-4, to=3-4]
	\arrow["{\tilde{\alpha}_t}"', curve={height=-24pt}, maps to, from=3-1, to=1-4]
	\arrow["{\operatorname{id}\otimes \pi}"', maps to, from=3-1, to=3-2]
	\arrow["{\alpha_t}"', maps to, from=3-2, to=3-3]
	\arrow["{\sigma_t}"', maps to, from=3-3, to=3-4]
\end{tikzcd}\]
\[\begin{tikzcd}
	&&& {t^{p+1}\otimes1 +m\otimes\lambda +1\otimes\lambda+y} \\
	\\
	{t^{p+1}\otimes1} & {t^{p+1}\otimes1} & {t^{p+1}\otimes1 +1\otimes\varepsilon} & {t^{p+1}\otimes1 +m\otimes\varepsilon +1\otimes\varepsilon}
	\arrow["{\operatorname{id}\otimes \pi}"', maps to, from=1-4, to=3-4]
	\arrow["{\tilde{\alpha}_t}"', curve={height=-24pt}, maps to, from=3-1, to=1-4]
	\arrow["{\operatorname{id}\otimes \pi}"', maps to, from=3-1, to=3-2]
	\arrow["{\alpha_t}"', maps to, from=3-2, to=3-3]
	\arrow["{\sigma_t}"', maps to, from=3-3, to=3-4]
\end{tikzcd}\]

where $x,y\in \operatorname{Ker}(\operatorname{id}\otimes\pi), m\in (t^p,t^{p+1})\subset R_t$. There exist $x',y'\in R_t\otimes_kk[\lambda]$ such that $x=\lambda^2x',y=\lambda^2y'$.

In $R_t\otimes_kk[\lambda]$, we have $$ \begin{aligned}0&=\widetilde{\alpha}_t((t^p)^{p+1}-(t^{p+1})^p )
 \\
 &=(t^p\otimes1+\lambda^2x')^{p+1}- (t^{p+1}\otimes1 +m\otimes \lambda +1\otimes\lambda+\lambda^2y')^p.
 \end{aligned}$$

 Let $\pi': R_t\to k$ be the canonical projection. The latter element goes under $\pi'\otimes\operatorname{id} :R_t\otimes_kk[\lambda]\to k\otimes_kk[\lambda]$ to
 $$\begin{aligned} 
 &(0\otimes1+\lambda^2x'')^{p+1}- (0\otimes1 +0\otimes1+1\otimes\lambda+\lambda^2y'')^p \\
 =&-\lambda^p(1+\lambda y'')^p \\
 =&-\lambda^p\neq 0 ,\\
\end{aligned}$$
hence contradiction. Here, we used the relation $\lambda^{p+1}=0$. $x''$ and $y''$ are the images of $x',y'$ in $k\otimes_kk[\lambda]$. 
\end{proof}

\section{Example II: Normal projective surface}
In this section, we again assume that the base field $k $ is of characteristic $p>0$.
The goal is to construct a normal rational projective surface $X_0$ with one rational double point and to show that it does not satisfy Schlessinger's criterion $(H_1)$. We realize $X_0$ as a closed subscheme of $\mathbb{P}^9_k.$\\

Let $\mathbb{P}^9_k=\operatorname{Proj}k[T_0,\dots,T_9]$ be the projective $9$-space. Let $V_i$ denote the open sets defined by $T_i\neq 0$ and let $\zeta_{ij}$ denote $T_j/T_i.$ Let $R_0=k[x,y,z]$ be the $k$-algebra defined by the relation $xz=y^p$. We identify $U_0=\operatorname{Spec}R_0$ with a closed subscheme of $V_0$ under the $k$-algebra surjection $\varphi$ defined as
$$\varphi: k[\zeta_{01},\dots,\zeta_{09}]\to R_0,$$
$$\varphi(\zeta_{01})=x,\ \varphi(\zeta_{02})=x^{p(p+1)+1}y^{p+1},\ \varphi(\zeta_{03})=yz,\ \varphi(\zeta_{04})=z,\ \varphi(\zeta_{05})=y,\ $$
$$\varphi(\zeta_{06})=x^{p+1}y,\ \varphi(\zeta_{07})=x^{p^2+1}y^p, \ \varphi(\zeta_{08})=x^{p(p+1)}y^{p+1}, \ \varphi(\zeta_{09})=y^{p+1}.$$

Let $X_0$ be the closure of $U_0$ in $\mathbb{P}^9_k$ with the reduced subscheme structure. Let $U_i:=X_0\cap V_i$ for $i \ge 1$. Then, for each $i$, $U_i$ is the Spec of the subring
$$R_i:=k[\frac{1}{\varphi(\zeta_{0i})},\frac{\varphi(\zeta_{01})}{\varphi(\zeta_{0i})},\dots,\frac{\varphi(\zeta_{09})}{\varphi(\zeta_{0i})}]\subset K$$
where $K=\operatorname{Frac}R_0.$ In view of this, we have
$$R_1=k[x^{-1},x^py],$$
$$R_2=k[x^{-1},x^{-p}y^{-1}],$$
$$R_3=k[x,y^{-1}],$$
$$R_4=k[xy^{-p},y],$$
$$R_5=k[x,x^{-1},y,y^{-1}],$$
$$R_6=R_7=k[x^py,x^{-p}y^{-1},x^{-1}],$$
$$R_8=R_9=k[x,x^{-1},y^{-1}].$$
Because $R_1,\dots , R_9$ are smooth, $X_0$ is a normal integral projective surface with only one rational double point. Moreover, $X_0$ is covered by $U_0,U_1,U_2,U_3$ and $U_4$ because $R_5,\dots ,R_9$ are localizations of the other $R_i$'s.\\

Now, consider the following cartesian diagram in $\mathcal{A}$:
\[\begin{tikzcd}
	{\overline{A}} & {A'=k[\lambda]} \\
	{A''=k[\lambda]} & {A=k[\varepsilon]}
	\arrow[from=1-1, to=1-2]
	\arrow[from=1-1, to=2-1]
	\arrow["\pi", from=1-2, to=2-2]
	\arrow["\pi"', from=2-1, to=2-2]
\end{tikzcd}\]

where $k[\varepsilon]$ and $k[\lambda]$ are defined by $\varepsilon^p=0$ and $\lambda^{p+1}=0$ and where $\pi (\lambda)=\varepsilon.$
To show that $X_0$ do not satisfy $(H_1)$, it suffices to construct locally trivial deformations of $X_0$ over $k[\varepsilon]$, $k[\lambda]$, $k[\lambda]$ which fit in the above diagram and which do not lift to any locally trivial deformation over $\overline{A}$.
Let $X$ and $X'$ be the trivial deformations of $X_0$ over $A$ and $A'$, respectively and let $X\to X'$ defined by $\pi$.
Next, we would like to construct a locally trivial deformation $X''$ over $A''$ via gluing. Let 
$$R_{01}=k[x,y,x^{-1}],$$
$$R_{12}=k[x^py,x^{-p}y^{-1},x^{-1}],$$
$$R_{23}=k[x,x^{-1},y^{-1}],$$
$$R_{34}=R_{43}=k[x,y,y^{-1}],$$
$$R_{04}=k[xy^{-p},x^{-1}y^p,y],$$
$$R_{02}=R_{03}=R_{13}=R_{14}=R_{24}=k[x,x^{-1},y,y^{-1}].$$
We have $U_i \cap U_j=\operatorname{Spec}R_{ij}$ for $0\le i < j\le 4$ and $U_i \cap U_j  \cap U_k = \operatorname{Spec}R_{02}$ for $0\le i < j < k \le 4$.
Let us define $k[\lambda]-$automorphisms as 
$$\psi_{01}:R_{01}[\lambda]\to R_{01}[\lambda],\ (x,y)\mapsto (x,y+\lambda),$$
$$\psi_{12}=\operatorname{id}:R_{12}[\lambda]\to R_{12}[\lambda], $$
$$\psi_{23}=\operatorname{id}:R_{23}[\lambda]\to R_{23}[\lambda], $$
$$\psi_{43}:R_{43}[\lambda]\to R_{43}[\lambda],\  (x,y)\mapsto (x(1-\frac{\lambda}{y})^p,y),$$
$$\psi_{04}:R_{04}[\lambda]\to R_{04}[\lambda],\  (xy^{-p},y)\mapsto (xy^{-p},y+\lambda),$$
$$\psi_{02}=\psi_{03}:R_{02}[\lambda]\to R_{02}[\lambda],\ (x,y)\mapsto (x,y+\lambda),$$
$$\psi_{13}=\operatorname{id}:R_{13}[\lambda]\to R_{13}[\lambda],$$
$$\psi_{14}=\psi_{24}:R_{14}[\lambda]\to R_{14}[\lambda], \ (x,y)\mapsto (x(1+\frac{\lambda}{y})^p,y).$$

The $\psi_{ij}$'s satisfy the cocycle conditions such that

$$(\operatorname{id}_{R_{02}}\otimes_{R_{jk}} \psi_{jk}) \circ (\operatorname{id}_{R_{02}}\otimes_{R_{ij}} \psi_{ij}) = \operatorname{id}_{R_{02}}\otimes_{R_{ik}}\psi_{ik}.$$

Let us glue the $U_i[\lambda]$ as
$$U_j[\lambda]=\operatorname{Spec}R_j[\lambda]\supset \operatorname{Spec}R_{ij}[\lambda]\overset{\psi_{ij}^{\sharp}}{\longrightarrow}\operatorname{Spec}R_{ij}[\lambda]\subset\operatorname{Spec}R_i[\lambda] =U_i[\lambda] $$
for every $0\le i <j\le 4$, where $\psi_{ij}^\sharp =\operatorname{Spec}\psi_{ij}$. We obtain a locally trivial deformation $X''$ over $A''.$ Note that $X''$ becomes isomorphic to $X$ when restricted to $A$. To see this, we construct an isomorphism of sheaves of $A$-algebras $$\rho : \mathcal{O}_{X''}\otimes_{A''}A\to \mathcal{O}_X=\mathcal{O}_{X_0}\otimes_k A.$$  First, note that $\psi_{43},\psi_{14}$ and $\psi_{24}$ become trivial when restricted to $A$. Let $\phi:R_0[\varepsilon]\to R_0[\varepsilon],(x,y,z)\to (x,y+\varepsilon,z).$ This is well-defined because $xz-(y+\varepsilon)^p=xz-y^p$ by the relations $p=\varepsilon^p=0$. For every $i=1,2,3,4,$ we have
$$\psi_{0i}\otimes_{A''}\operatorname{id}_{A}=\operatorname{id}_{R_{0i}} \otimes_{R_0}\phi:R_{0i}[\varepsilon]\to R_{0i}[\varepsilon].$$
This means that if we define $\rho_i=\rho|_{U_i}: R_i[\varepsilon]\to R_i[\varepsilon]$ for $0\le i \le 4$ by
$$\rho_0=\phi$$
and
$$\rho_i=\operatorname{id}_{R_i[\varepsilon]}$$
for $1\le i \le 4,$ they glue together to yield to an isomorphism of sheaves $\rho : \mathcal{O}_{X''}\otimes_{A''}A\to \mathcal{O}_X$.

\begin{lemma}
Let $\sigma: \mathcal{O}_X\to \mathcal{O}_X$ be a $k[\varepsilon]$-automorphism that induces a deformation automorphism $X\to X.$ Then, we have
$$\sigma(y)-y\in yR_0\otimes \varepsilon k[\varepsilon]+ zR_0\otimes \varepsilon k[\varepsilon]+R_0\otimes \varepsilon^2 k[\varepsilon],$$
$$\sigma(z)-z\in zR_0\otimes\varepsilon k[\varepsilon].$$
\begin{proof}
As $\sigma(x^{-p}y^{-1})\in H^0(U_2[\varepsilon],\mathcal{O}_X),$ we can write as
$$\sigma(x^{-p}y^{-1})=x^{-p}y^{-1}+\varepsilon f,\  f\in R_2[\varepsilon].$$
Similarly, $\sigma(x^{-1})\in H^0(U_1[\varepsilon],\mathcal{O}_X)\cap H^0(U_2[\varepsilon],\mathcal{O}_X)=(H^0(U_1,\mathcal{O}_{X_0})\cap H^0(U_2,\mathcal{O}_{X_0}))\otimes_k k[\varepsilon]=k[x^{-1},\varepsilon]$ so that
$$ \sigma(x^{-1})=x^{-1}+\varepsilon g,\  g\in k[x^{-1},\varepsilon].$$
Because $p=\varepsilon^p=0,$ we have
$$\sigma(x^{-p})=x^{-p},\ \sigma(x^{p})=x^p,$$
$$\sigma(y^{-1})=\sigma(x^p\cdot x^{-p}y^{-1})=y^{-1}+\varepsilon x^pf,$$
$$\sigma(y)\equiv y-\varepsilon x^py^2f \ \mathrm{mod} \ \varepsilon^2.$$
As $x^py^2R_2\cap R_0=ky+kz+ky^2+kxy^2+kx^2y^2$ if $p=2$ and $x^py^2R_2\cap R_0=ky+ky^2+kxy^2+kx^2y^2+\dots+kx^py^2$ if $p\ge 3$, the first inclusion follows.
Again because $p=\varepsilon^p=0,$ we have
$$\sigma(y^p)=y^p,$$
so that $$\sigma(z)=\sigma(x^{-1}y^p)=x^{-1}y^p+\varepsilon y^pg.$$
As $y^pk[x^{-1}]\cap R_0=kx^{-1}y^p+ky^p=kz+kxz,$ the second inclusion follows.
\end{proof}
\end{lemma}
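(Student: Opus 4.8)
The plan is to pin down the action of $\sigma$ on a few well-chosen generators, viewed inside the function field, and then to use the characteristic-$p$ Frobenius together with the relation $\varepsilon^p=0$ to annihilate all the troublesome terms. Throughout I would work inside $K\otimes_k k[\varepsilon]$, where $K=\operatorname{Frac}R_0$, using three facts: that $\sigma$ is $k[\varepsilon]$-linear; that it reduces to the identity modulo $\varepsilon$ (this is what it means for $\sigma$ to \emph{induce a deformation automorphism}); and that, being a sheaf automorphism of $\mathcal{O}_X=\mathcal{O}_{X_0}\otimes_k k[\varepsilon]$, it carries each space of sections $H^0(U_i[\varepsilon],\mathcal{O}_X)=R_i\otimes_k k[\varepsilon]$ into itself.

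The first step records two basic expansions. Since $\sigma(x^{-p}y^{-1})\in R_2\otimes_k k[\varepsilon]$ and $\sigma$ is the identity modulo $\varepsilon$, I would write $\sigma(x^{-p}y^{-1})=x^{-p}y^{-1}+\varepsilon f$ with $f\in R_2[\varepsilon]$. For $x^{-1}$ the key point is that $\sigma(x^{-1})$ lies in $\bigl(H^0(U_1,\mathcal{O}_{X_0})\cap H^0(U_2,\mathcal{O}_{X_0})\bigr)\otimes_k k[\varepsilon]$; a short monomial computation gives $R_1\cap R_2=k[x^{-1}]$ inside $K$, so $\sigma(x^{-1})=x^{-1}+\varepsilon g$ with $g\in k[x^{-1},\varepsilon]$. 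Now the characteristic-$p$ input enters: raising to the $p$-th power and using $\varepsilon^p=0$ gives $\sigma(x^{-p})=(x^{-1}+\varepsilon g)^p=x^{-p}$, hence also $\sigma(x^p)=x^p$ and $\sigma(y^p)=\sigma(y)^p=y^p$. Multiplicativity then yields $\sigma(y^{-1})=\sigma(x^p)\,\sigma(x^{-p}y^{-1})=y^{-1}+\varepsilon x^p f$, and inverting modulo $\varepsilon^2$ gives $\sigma(y)\equiv y-\varepsilon x^p y^2 f\pmod{\varepsilon^2}$, while $\sigma(z)=\sigma(x^{-1})\,\sigma(y^p)=z+\varepsilon y^p g$ holds exactly.

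The final step is to intersect back with $R_0$. Because $y,z\in R_0=H^0(U_0,\mathcal{O}_{X_0})$, their images satisfy $\sigma(y),\sigma(z)\in R_0\otimes_k k[\varepsilon]$, so the $\varepsilon$-coefficients found above must in fact lie in $R_0$. For $\sigma(y)$ the relevant coefficient lies in $x^p y^2 R_2$, so I must compute $x^p y^2 R_2\cap R_0$; for $\sigma(z)$ it lies in $y^p k[x^{-1}]$, so I must compute $y^p k[x^{-1}]\cap R_0$. I expect these two monomial intersections to be the main (and essentially the only) real work: they rest on the normal-form description of $R_0=k[x,y,z]/(xz-y^p)$ as the $k$-span of the monomials $x^m y^n$ with $n\ge 0$ and $pm+n\ge 0$ inside $K=k(x,y)$, and the computation of $x^p y^2 R_2\cap R_0$ splits into the cases $p=2$ and $p\ge 3$. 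Granting those computations, $x^p y^2 R_2\cap R_0$ is spanned by $y$ and the monomials $x^j y^2$ (together with $z$ when $p=2$), all of which lie in $yR_0+zR_0$; this yields the first inclusion once the part of $\sigma(y)$ of order $\ge 2$ in $\varepsilon$ is absorbed into $R_0\otimes\varepsilon^2 k[\varepsilon]$. Likewise $y^p k[x^{-1}]\cap R_0=kz+kxz\subseteq zR_0$, which gives the second inclusion.
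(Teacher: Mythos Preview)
Your proposal is correct and follows essentially the same route as the paper: you use the same generators $x^{-p}y^{-1}\in R_2$ and $x^{-1}\in R_1\cap R_2=k[x^{-1}]$, the same Frobenius/$\varepsilon^p=0$ trick to get $\sigma(x^{\pm p})=x^{\pm p}$ and $\sigma(y^p)=y^p$, and you reduce to the same two monomial intersections $x^py^2R_2\cap R_0$ (with the $p=2$ versus $p\ge 3$ split) and $y^pk[x^{-1}]\cap R_0=kz+kxz$. The only cosmetic addition is that you make explicit the normal-form description of $R_0$ as the span of $x^my^n$ with $n\ge 0$ and $pm+n\ge 0$, which the paper leaves implicit.
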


The following is the main theorem in this section.
\begin{theorem}
Let $A,A',A'',\overline{A},X_0,X,X'$ and $X''$ as constructed above (that are different from those in Section 3). Then, the pair $(X'', X')$ does not lift to any locally trivial deformation over $\overline{A}$. In particular, $X_0$ is a normal projective rational surface with only one rational double point in positive characteristic whose locally trivial deformation functor does not satisfy Schlessinger's condition $(H_1)$.
\end{theorem}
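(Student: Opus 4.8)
The plan is to argue in the same spirit as the proof of Theorem 3.2, the rational double point playing the role of the monomial curve. By Lemma 2.2, every $[\overline{X}]\in\alpha^{-1}([X''],[X'])$ satisfies $\mathcal{O}_{\overline{X}}\cong\mathcal{O}_{X''}\times_{\sigma}\mathcal{O}_{X'}$ for some $\sigma$; via the trivialization $X'=X_0[\lambda]$ and the isomorphism $\rho$ constructed above (which identifies $\mathcal{O}_{X''}\otimes_{A''}A$ with $\mathcal{O}_X$), such a $\sigma$ amounts to a $k[\varepsilon]$-algebra automorphism of $\mathcal{O}_X$ inducing the identity on $\mathcal{O}_{X_0}$, i.e.\ a deformation automorphism, so Lemma 4.1 applies to it. Hence it suffices to prove that for every such $\sigma$ the sheaf of $\overline{A}$-algebras $\mathcal{O}_{X''}\times_{\sigma}\mathcal{O}_{X'}$ is not locally trivial. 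Assume it were. Let $P\in U_0$ be the rational double point, corresponding to $\mathfrak{m}=(x,y,z)\subset R_0$; put $R=(R_0)_{\mathfrak{m}}$ and let $\overline{R}$ be the stalk of $\mathcal{O}_{X''}\times_{\sigma}\mathcal{O}_{X'}$ at $P$. Local triviality in a neighbourhood of $P$ yields a $k$-algebra section $g\colon R\to\overline{R}$ of the canonical surjection $\overline{R}\to R$.

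The next step, parallel to Section 3, is to extract from $g$ a $k[\lambda]$-algebra automorphism $\widetilde{\phi}_P$ of $R\otimes_kA''$ lifting $\sigma_P\circ\phi_P$, where $\phi_P$ is the stalk at $P$ of $\rho_0=\phi$, so $\phi_P(x)=x$, $\phi_P(y)=y+\varepsilon$, $\phi_P(z)=z$. Indeed $\overline{R}$ is the fibre product of $R\otimes_kA''\xrightarrow{\sigma_P\circ\phi_P\circ(\operatorname{id}\otimes\pi)}R\otimes_kA$ and $R\otimes_kA'\xrightarrow{\operatorname{id}\otimes\pi}R\otimes_kA$; the components $\operatorname{pr}_1\circ g$ and $\operatorname{pr}_2\circ g$ are $k$-algebra sections of the reductions modulo $\lambda$, and — since $R$ is cut out in $\mathbb{A}^3$ by the single equation $xz=y^p$ — they extend to $k[\lambda]$-algebra endomorphisms of $R\otimes_kA''$ and $R\otimes_kA'$; these reduce to the identity modulo $\lambda$, hence are automorphisms by Lemma 2.1. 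The fibre-product condition satisfied by $g$ forces these extensions, $\sigma_P\circ\phi_P$ and $\operatorname{id}\otimes\pi$ to fit into a commutative square, and composing one extension with the inverse of the other defines $\widetilde{\phi}_P$, with $(\operatorname{id}\otimes\pi)\circ\widetilde{\phi}_P=\sigma_P\circ\phi_P\circ(\operatorname{id}\otimes\pi)$.

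It then remains to contradict the existence of $\widetilde{\phi}_P$. Write $\sigma_P(x)=x+\sum_{i\ge1}\varepsilon^ix_i$, $\sigma_P(y)=y+\sum_{i\ge1}\varepsilon^iy_i$, $\sigma_P(z)=z+\sum_{i\ge1}\varepsilon^iz_i$ in $R\otimes_kA$, and let $\widehat{(\cdot)}$ denote the lift to $R\otimes_kA''$ obtained by substituting $\lambda$ for $\varepsilon$. Since $\ker\pi=(\lambda^p)$ we have $\widetilde{\phi}_P(x)=\widehat{\sigma_P(x)}+\lambda^pa$, $\widetilde{\phi}_P(y)=\widehat{\sigma_P(y)}+\lambda+\lambda^pb$, $\widetilde{\phi}_P(z)=\widehat{\sigma_P(z)}+\lambda^pc$ for some $a,b,c\in R$. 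Applying $\widetilde{\phi}_P$ to $xz=y^p$ and using $\lambda^{p+1}=0$ together with the Frobenius identity in characteristic $p$: the coefficients of $\lambda^i$ for $1\le i\le p-1$ vanish (because $\sigma_P(x)\sigma_P(z)=\sigma_P(xz)=\sigma_P(y)^p=y^p$ in $R\otimes_kA$, as $\varepsilon^p=0$), and comparing the coefficients of $\lambda^p$ gives an equality in $R$ of the form
\[
\sum_{i=1}^{p-1}x_iz_{p-i}+az+cx=y_1^p+1 .
\]
By Lemma 4.1, $z_i\in zR$ for all $i\ge1$ and $y_1\in yR+zR$, so the left-hand side lies in $(x,z)R$, while $y_1^p\in(yR+zR)^p\subseteq(y^p,z)R=zR\subseteq(x,z)R$ (using $y^p=xz$); hence $1\in(x,z)R$, which is absurd since $R/(x,z)R\cong k[y]/(y^p)\ne0$.

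I expect the last paragraph to be the crux: one must show that the obstruction to lifting the deformation automorphism $\sigma\circ\phi$ along $\pi$ is nonzero for \emph{every} admissible $\sigma$, not merely for $\sigma=\operatorname{id}$. This is precisely where the positive-characteristic identity $(y+\lambda)^p=y^p+\lambda^p\ne y^p$ enters, and where the exact content of Lemma 4.1 — the confinement of the $\varepsilon$-expansions of $\sigma(z)$ and $\sigma(y)$ to $zR$ and $(y,z)R$ — is needed, so that the obstruction survives and gets pinned down by the non-membership $1\notin(x,z)R$ at the rational double point.
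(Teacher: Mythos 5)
Your proposal is correct and follows essentially the same route as the paper's proof: reduce via Lemma 2.2 to showing $\mathcal{O}_{X''}\times_\sigma\mathcal{O}_{X'}$ is not locally trivial, use a local trivialization at the stalk of $P$ together with Lemma 2.1 to lift $\sigma_P\circ\phi_P$ to a $k[\lambda]$-automorphism, and extract the $\lambda^p$-obstruction from $\widetilde{\phi}_P(xz-y^p)=0$ by means of the Frobenius identity and the confinements of Lemma 4.1. The only cosmetic difference is the final step, where you compare coefficients of $\lambda^p$ and contradict $1\in(x,z)R$, whereas the paper projects via $\pi'\otimes\operatorname{id}$ to $k\otimes_k k[\lambda]$ and obtains $-\lambda^p\neq 0$; the two are equivalent.
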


\begin{proof} By Lemma 2.2, it suffices to show that for any $\sigma$ as in Lemma 4.1, the fiber product $\mathcal{O}_{X''}\times_\sigma \mathcal{O}_{X'}$ is not locally trivial. So, let us suppose $\mathcal{O}_{X''}\times_\sigma \mathcal{O}_{X'}$ is locally trivial. We are going to show this leads to contradiction. Let us denote by $R_P$ [resp. $\overline{R}_P$] the stalk of $\mathcal{O}_{X_0}$ [$\mathcal{O}_{X''}\times_\sigma \mathcal{O}_{X'}$] at the point $P\in U_0$ that corresponds to the maximal ideal $(x,y,z)\subset R_0$. By the triviality, we have a $k$-section $h: R_P\to \overline{R}_P$ for the canonical morphism $\overline{R}_P \to R_P$. Because we can identify $\mathcal{O}_{X''}\to \mathcal{O}_{X}$ as $\phi \circ (\operatorname{id}\otimes \pi)$ at $P$, we have the following commutative diagram:

\[\begin{tikzcd}
	{R_P} \\
	& {\overline{R}_P} &&& {R_P\otimes_kk[\lambda]} \\
	& {R_P\otimes_kk[\lambda]} & {R_P\otimes k[\varepsilon]} & {R_P\otimes k[\varepsilon]} & {R_P\otimes k[\varepsilon]}
	\arrow["h"{description}, from=1-1, to=2-2]
	\arrow["{\operatorname{pr}_2\circ h}", curve={height=-12pt}, dashed, from=1-1, to=2-5]
	\arrow["{\operatorname{pr}_1\circ h}"', curve={height=18pt}, dashed, from=1-1, to=3-2]
	\arrow["{\operatorname{pr}_2}", from=2-2, to=2-5]
	\arrow["{\operatorname{pr}_1}", from=2-2, to=3-2]
	\arrow["{\operatorname{id}\otimes \pi}"', from=2-5, to=3-5]
	\arrow["{\operatorname{id}\otimes \pi}", from=3-2, to=3-3]
	\arrow["{\phi_P}", from=3-3, to=3-4]
	\arrow["{\sigma_P}", from=3-4, to=3-5]
\end{tikzcd}\]

where $\phi_P,\sigma_P$ is the stalk of $\phi,\sigma$ at $P$. By tensoring $\operatorname{id}_{k[\lambda]}$ with $\operatorname{pr}_i\circ h$,

\[\begin{tikzcd}
	{R_P\otimes_kk[\lambda]} &&& {R_P\otimes_kk[\lambda]} \\
	\\
	{R_P\otimes_kk[\lambda]} & {R_P\otimes_kk[\varepsilon]} & {R_P\otimes_kk[\varepsilon]} & {R_P\otimes_kk[\varepsilon]}
	\arrow["{(\operatorname{pr}_2\circ h )\otimes \operatorname{id}_{k[\lambda]}}", from=1-1, to=1-4]
	\arrow["{(\operatorname{pr}_1\circ h )\otimes \operatorname{id}_{k[\lambda]}}"', from=1-1, to=3-1]
	\arrow["{\operatorname{id}\otimes \pi}"', from=1-4, to=3-4]
	\arrow["{\operatorname{id}\otimes \pi}"', from=3-1, to=3-2]
	\arrow["{\phi_P}"', from=3-2, to=3-3]
	\arrow["{\sigma_P}"', from=3-3, to=3-4]
\end{tikzcd}\]
Here, $(\operatorname{pr}_i\circ h )\otimes \operatorname{id}_{k[\lambda]}$ are $k[\lambda]$-isomorhisms by lemma 2.1. Hence, $\sigma_P\circ \phi_P$ lifts to $\widetilde{\sigma}_P$, a $k[\lambda]$-automorphism of $R_P\otimes_kk[\lambda]$:

\[\begin{tikzcd}
	&&& {R_P\otimes_kk[\lambda]} \\
	\\
	{R_P\otimes_kk[\lambda]} & {R_P\otimes_kk[\varepsilon]} & {R_P\otimes_kk[\varepsilon]} & {R_P\otimes_kk[\varepsilon]}
	\arrow["{\operatorname{id}\otimes \pi}"', from=1-4, to=3-4]
	\arrow["{\widetilde{\sigma}_P}"', curve={height=-24pt}, dashed, from=3-1, to=1-4]
	\arrow["{\operatorname{id}\otimes \pi}"', from=3-1, to=3-2]
	\arrow["{\phi_P}"', from=3-2, to=3-3]
	\arrow["{\sigma_P}"', from=3-3, to=3-4]
\end{tikzcd}\]

Now, by lemma 4.1, $x\otimes 1,y\otimes1,,z\otimes1 \in R_P\otimes_kk[\lambda]$ go along the above diagram as

\[\begin{tikzcd}
	&&& {x\otimes1+\lambda m} \\
	\\
	{x\otimes1} & {x\otimes1} & {x\otimes1} & {x\otimes1+ \varepsilon \overline{m}}
	\arrow["{\operatorname{id}\otimes \pi}"', maps to, from=1-4, to=3-4]
	\arrow["{\widetilde{\sigma}_P}"', curve={height=-24pt}, maps to, from=3-1, to=1-4]
	\arrow["{\operatorname{id}\otimes \pi}"', maps to, from=3-1, to=3-2]
	\arrow["{\phi_P}"', maps to, from=3-2, to=3-3]
	\arrow["{\sigma_P}"', maps to, from=3-3, to=3-4]
\end{tikzcd}\]

\[\begin{tikzcd}
	&&& {y\otimes1+1\otimes\lambda+\lambda n_1+\lambda^2 n_2 +\lambda^pn'} \\
	\\
	{y\otimes1} & {y\otimes1} & {y\otimes1+1\otimes\varepsilon} & {y\otimes1+1\otimes\varepsilon+\varepsilon \overline{n_1}+\varepsilon^2 \overline{n_2}}
	\arrow["{\operatorname{id}\otimes \pi}"', maps to, from=1-4, to=3-4]
	\arrow["{\widetilde{\sigma}_P}"', curve={height=-24pt}, maps to, from=3-1, to=1-4]
	\arrow["{\operatorname{id}\otimes \pi}"', maps to, from=3-1, to=3-2]
	\arrow["{\phi_P}"', maps to, from=3-2, to=3-3]
	\arrow["{\sigma_P}"', maps to, from=3-3, to=3-4]
\end{tikzcd}\]
\[\begin{tikzcd}
	&&& {z\otimes1+\lambda l+\lambda^pl'} \\
	\\
	{z\otimes1} & {z\otimes1} & {z\otimes1} & {z\otimes1+\varepsilon \overline{l}}
	\arrow["{\operatorname{id}\otimes \pi}"', maps to, from=1-4, to=3-4]
	\arrow["{\widetilde{\sigma}_P}"', curve={height=-24pt}, maps to, from=3-1, to=1-4]
	\arrow["{\operatorname{id}\otimes \pi}"', maps to, from=3-1, to=3-2]
	\arrow["{\phi_P}"', maps to, from=3-2, to=3-3]
	\arrow["{\sigma_P}"', maps to, from=3-3, to=3-4]
\end{tikzcd}\]

where $$\overline{m}\in R_P[\varepsilon],\ m\in R_P[\lambda],\ \overline{n_1}\in (y,z)R_P[\varepsilon],\ \overline{n_2}\in R_P[\varepsilon],$$ $$n_1\in (y,z)R_P[\lambda],\  n_2,n'\in R_P[\lambda],\  \overline{l}\in zR_P[\varepsilon], \ l\in zR_P[\lambda],\ l'\in R_P[\lambda]$$ such that $m, n_1, n_2, l$ are preimages of $\overline{m}, \overline{n_1},\overline{n_2},\overline{l}$, respectively.

In $R_P\otimes _k k[\lambda]$, we have
$$ \begin{aligned}0&=\widetilde{\sigma}_P(xz-y^p )
 \\
 &=(x\otimes 1 +\lambda m)(z\otimes 1 +\lambda l+\lambda^p l')-(y\otimes 1 + 1\otimes \lambda +\lambda n_1+\lambda^2n_2+ \lambda ^p n')^p.
 \end{aligned}$$

 Let $\pi': R_P\to k$ be the canonical projection. The latter element goes under $\pi'\otimes\operatorname{id} :R_P\otimes_kk[\lambda]\to k\otimes_kk[\lambda]$ to
 $$\begin{aligned} 
 &(0\otimes 1 +\lambda m')(0\otimes 1 +\lambda\cdot 0+\lambda^p l'')-(0\otimes 1 + 1\otimes \lambda +\lambda \cdot 0+ \lambda^2n_2' +\lambda ^p n'')^p\\
 =&-\lambda^p(1+\lambda n_2'+\lambda^{p-1}n'')^p \\
 =&-\lambda^p\neq 0 ,\\
\end{aligned}$$
hence contradiction. Here, we used the relation $\lambda^{p+1}=0$. $m',l'',n_2',n''$ are the images of $m,l',n_2,n'$ in $k\otimes_kk[\lambda]$. 
\end{proof}

\textbf{Acknowledgements.} The author would like to thank Professors Keiji Oguiso, Stefan Schröer, Xun Yu, Yujiro Kawamata and Gebhard Martin and the members of Oguiso's laboratory for their interest in this work and comments for preliminary versions.

\end{document}